\documentclass[12pt]{article}
\usepackage{amsmath,amsfonts,amssymb,amsthm,bm,verbatim}
\usepackage{fullpage, comment,cases}
\usepackage{todonotes}

\newtheorem{thm}{Theorem}
\newtheorem{lem}[thm]{Lemma}

\theoremstyle{definition}
\newtheorem*{Def}{Definition}

\DeclareMathOperator{\girth}{girth}

\newcommand\F{\mathcal{F}}
\newcommand\N{\mathcal{N}}
\newcommand\E{\mathcal{E}}
\newcommand\M{\mathcal{M}}
\newcommand\dist{\text{dist}}
\DeclareMathOperator\im{im}
\begin{document}

\title{Hedetniemi's conjecture is asymptotically false}
\author{Xiaoyu He\thanks{Department of Mathematics, Stanford University, Stanford, CA 94305, USA. Email: {\tt alkjash@stanford.edu}. Research supported by an NSF GRFP grant number DGE-1656518.}\and Yuval Wigderson\thanks{Department of Mathematics, Stanford University, Stanford, CA 94305, USA. Email: {\tt yuvalwig@stanford.edu}. Research supported by an NSF GRFP grant number DGE-1656518.}}

\maketitle
\begin{abstract}
    Extending a recent breakthrough of Shitov, we prove that the chromatic number of the tensor product of two graphs can be a constant factor smaller than the minimum chromatic number of the two graphs. More precisely, we prove that there exists an absolute constant $\delta>0$ such that for all $c$ sufficiently large, there exist graphs $G$ and $H$ with chromatic number at least $(1+\delta)c$ for which $\chi(G \times H) \le c$.
\end{abstract}

\section{Introduction}
If $G$ and $H$ are finite graphs, their \emph{tensor product} $G\times H$ is the graph on $V(G)\times V(H)$ where vertices $(g_1, h_1)$ and $(g_2, h_2)$ are adjacent if and only if $g_1\sim g_2$ and $h_1 \sim h_2$; here and throughout, we use the notation $\sim$ to denote adjacency. By composing with the projection maps to each coordinate, it is easy to check that the chromatic number satisfies
\begin{equation}\label{eq:conj}
    \chi(G \times H) \leq \min\{\chi(G),\chi(H)\},
\end{equation}
for all finite graphs $G$ and $H$. In 1966, Hedetniemi~\cite{He} conjectured that equality always holds in (\ref{eq:conj}). This conjecture has received a considerable amount of attention; for instance, it was proved if $G$ and $H$ are $4$-colorable \cite{ElSa}, if every vertex in $G$ is contained in a large clique \cite{BuErLo}, or if $G$ and $H$ are Kneser graphs or hypergraphs \cite{HaMe}. Additionally, many natural variants of this conjecture have been studied. For instance, Hajnal \cite{Ha} proved that the analogous conjecture is false for infinite graphs, while Zhu \cite{Zh11} proved that the analogous conjecture for fractional colorings is true. We refer the reader to the excellent surveys \cite{Sa,Ta, Zh98} for more information on work surrounding Hedetniemi's conjecture.

In a recent breakthrough, Shitov~\cite{Sh} disproved Hedetniemi's conjecture by demonstrating that for sufficiently large $c$, there exist graphs $G$ and $H$ with $\chi(G)>c,\chi(H)>c$, but $\chi(G \times H) \leq c$. Even more recently, Tardif and Zhu \cite{TaZh} proved that the gap between $\chi(G \times H)$ and $\min \{\chi(G),\chi(H)\}$ can be arbitrarily large, i.e.\ that for every integer $d$ and for all $c$ sufficiently large, there exist graphs $G$ and $H$ with $\chi(G)>c+d,\chi(H)>c+d$, but $\chi(G \times H) \leq c$. They also raised the question of whether the gap $d$ can be made to be linear in $c$, and proved that this is possible under the additional assumption that Stahl's conjecture on multichromatic numbers is true.

In this paper, we modify Shitov's construction to answer Tardif and Zhu's question in the affirmative, showing that the ratio of $\chi(G \times H)$ and $\min\{\chi(G),\chi(H)\}$ is asymptotically bounded away from $1$.

\begin{thm}\label{thm:main}
There is an absolute constant $\delta\geq 10^{-9}$ such that for all sufficiently large $c$, there exist simple graphs $G,H$ with $\chi(G)\geq (1+\delta)c,\chi(H) \geq (1+\delta)c$, and $\chi(G \times H) \leq c$.
\end{thm}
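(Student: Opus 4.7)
My plan is to follow the general strategy of Shitov~\cite{Sh} but to track constants throughout, and to make quantitative modifications to his construction sufficient to yield a \emph{linear} (rather than merely positive) gap between $\min\{\chi(G),\chi(H)\}$ and $\chi(G\times H)$.

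The first step is the standard exponential-graph reduction. For any graph $G$, the exponential graph $K_c^G$ has vertex set $[c]^{V(G)}$ with two maps adjacent if and only if they disagree on every edge of $G$; then $\chi(G\times H)\le c$ iff there is a homomorphism $H\to K_c^G$. In particular $G\times K_c^G$ is properly $c$-colored by $(v,f)\mapsto f(v)$, so it suffices to produce a single graph $G$ satisfying both $\chi(G)\ge(1+\delta)c$ and $\chi(K_c^G)\ge(1+\delta)c$, and then take $H=K_c^G$.

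For the construction, I would start with a base graph $G_0$ of very large odd girth $g$ and very large (fractional) chromatic number, as produced by Erd\H{o}s-type constructions, and form $G$ by substituting structured gadgets (e.g.\ cliques of a carefully chosen size $q$) at each vertex of $G_0$, as in Shitov. The blow-up parameter $q$ and the chromatic number of $G_0$ should be tuned so that $\chi(G)$ sits just above $(1+\delta)c$; this half is easy because substitution controls the chromatic number multiplicatively and a high-girth base has chromatic number close to its fractional chromatic number. The purpose of the blow-up and the high girth of $G_0$ is to enforce \emph{rigidity}: any near-optimal coloring of $K_c^G$ must, up to a small error, be a pullback of a coloring of $G_0$.

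The heart of the plan, and the main obstacle, is the quantitative lower bound $\chi(K_c^G)\ge(1+\delta)c$. Shitov identifies inside $K_c^G$ a dense family of ``typical'' maps that behave like independent-set pullbacks from $G_0$, and uses the girth of $G_0$ to argue that this family cannot be partitioned into $c$ independent sets. The work here is to make this obstruction robust to a $(1+\delta)$-factor of extra colors: one needs a strengthened rigidity lemma saying that any independent set in $K_c^G$ of sufficient size must be concentrated on a short list of vertices of $G_0$, so that fitting $(1+\delta)c$ independent sets would require $G_0$ itself to admit a coloring violating its fractional chromatic number bound. The final step is to optimize the interplay between $\delta$, the girth $g$, the blow-up size $q$, and the chromatic number of $G_0$; the statement $\delta\ge 10^{-9}$ should come out of this bookkeeping without any attempt to push the constant.
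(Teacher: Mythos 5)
Your high-level architecture matches the paper's: reduce to a single graph $H$ with $\chi(H)\ge(1+\delta)c$ and $\chi(\E_c(H))\ge(1+\delta)c$ via El-Zahar--Sauer, and take $H$ to be a clique blow-up (equivalently, the strong product $G\boxtimes K_q$) of a high-girth, high-fractional-chromatic-number base graph. However, the proposal defers exactly the step that constitutes the entire mathematical content of the result, so as written there is a genuine gap. The sentence ``one needs a strengthened rigidity lemma saying that any independent set in $K_c^G$ of sufficient size must be concentrated on a short list of vertices of $G_0$'' is a correct description of what is needed (it is the paper's Lemma~\ref{lem:robust}, an Erd\H{o}s--Ko--Rado-type stability statement about ``$v$-robust'' colors), but you neither state it precisely nor indicate how to prove it, and the quantitative form of this lemma --- that all but $O\bigl((tc^3+n^3c^3)^{1/4}\bigr)$ primary colors of a suited $(c+t)$-coloring are robust at a \emph{single} vertex $v$ --- is what ultimately determines that $\delta$ can be taken linear in $c$ (via $t=\delta c$) rather than merely constant.

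Moreover, the mechanism you propose for deriving the final contradiction is not the one that works. You suggest that packing $(1+\delta)c$ independent sets ``would require $G_0$ itself to admit a coloring violating its fractional chromatic number bound.'' In the paper, the fractional chromatic number of $G_0$ is used only for the easy half ($\chi(G\boxtimes K_q)\ge \chi_f(G)\,q>(1+\delta)c$); it plays no role in lower-bounding $\chi(\E_c(G\boxtimes K_q))$. That lower bound instead comes from an explicit two-stage clique construction: a clique $\M$ of $c-q$ maps $\mu_r$, of which at least $t+1$ are forced by suitedness to receive their own private primary color $r_s$, followed by a clique $\N$ of $t+1$ maps $\nu_s$ each of which is barred from its robust color $\sigma_s$ and hence must receive either $r_s$ or one of only $t$ secondary colors; the pigeonhole then forces some $\nu_s$ to collide with $\mu_{r_s}$ on a co-proper pair. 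The girth-$6$ (not large odd girth) condition on $G_0$ is what makes $\M$ a clique. Without this construction --- or some concrete substitute --- the plan does not yield a proof; the ``bookkeeping'' you allude to for extracting $\delta\ge 10^{-9}$ cannot even begin until these lemmas are in place.
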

Equivalently, we may express Theorem \ref{thm:main} in terms of the \emph{Poljak--R\"odl function}~\cite{PoRo}, which is defined by
\[
    f(k)=\min_{\chi(G), \chi(H)\geq k}\chi(G \times H).
\]
Then Hedetniemi's conjecture is equivalent to the statement that $f(k)=k$ for all $k$. The so-called weak version of Hedetniemi's conjecture simply asks whether $\lim_{k \to \infty}f(k)=\infty$, and is still open; however, Poljak and R\"odl \cite{PoRo} proved that either $f(k) \to \infty$ or $f(k)$ is bounded by $9$. With this notation, Theorem \ref{thm:main} is equivalent to the statement that
\[
    f(k) \leq (1- \delta)k
\]
for all sufficiently large $k$. For comparison, Shitov \cite{Sh} proved that $f(k)\leq k-1$ for sufficiently large $k$, and Tardif and Zhu \cite{TaZh} proved that $f(k) \leq k-(\log k)^{1/4-o(1)}$. Moreover, they proved that the stronger bound $f(k) \leq (1/2+o(1))k$ would follow from Stahl's conjecture.

The proof of Theorem \ref{thm:main} closely mirrors Shitov's proof of the main result in \cite{Sh}. Roughly speaking, the main innovation in our argument is that whereas Shitov constructs one uncolorable vertex $\nu\in V(H)$ to prove $\chi(H) \ge c+1$, we construct a large clique $\N$ of $\delta c$ uncolorable vertices of $H$ (see Lemma \ref{lem:main}) to obtain the stronger lower bound $\chi(H) \ge (1+\delta)c$.

For the sake of clarity of presentation, we systematically omit floor and ceiling signs whenever they are not crucial.

\section{Definitions and Basic Results}
All graphs are assumed to not have multiple edges, but graphs not specified to be simple may contain loops. We will never refer to the chromatic number of a non-simple graph, as this is not a well-defined integer.

If $H$ is a finite graph, two maps $\phi_1,\phi_2:V(H)\rightarrow[c]$ are called {\it co-proper} if $\phi_1(u)\ne \phi_2(v)$ whenever $u\sim v$ in $H$. The {\it exponential graph} $\E_c(H)$ is the graph on vertex set $[c]^{V(H)}$ where two vertices $\phi_1, \phi_2$ are adjacent if and only if they are co-proper. Observe that a map $\phi$ is co-proper with itself if and only if $\phi$ is a proper coloring of $H$. Therefore, at most one of $H$ and $\E_c(H)$ has loops. Moreover, both will be simple exactly when $H$ is simple and $\chi(H)>c$. To avoid confusion, we will follow the convention of calling vertices of $\E_c(H)$ \emph{maps} and proper colorings of $\E_c(H)$ {\it colorings}.

A pair of the form $(H, \E_c(H))$ is a natural candidate for counterexamples to Hedetniemi's conjecture. Indeed, El-Zahar and Sauer~\cite{ElSa} observed that if $\chi(H\times H') < \min\{\chi(H), \chi(H')\}$ for some $H'$, then this also holds for $H'=\E_c(H)$ where $c=\chi(H)-1$. In particular, they established a simple upper bound on $\chi(H\times \E_c(H))$.

\begin{lem}[El-Zahar and Sauer, \cite{ElSa}]\label{lem:c-colorable}
    For any graph $H$ and any integer $c\geq 1$, 
    \[
        \chi(H \times \E_c(H)) \leq c.
    \]
\end{lem}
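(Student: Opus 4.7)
The plan is to exhibit an explicit proper coloring $\Phi: V(H \times \E_c(H)) \to [c]$ that uses only the color palette $[c]$. The key observation is that every vertex of $H \times \E_c(H)$ has the form $(h, \phi)$ with $h \in V(H)$ and $\phi \in [c]^{V(H)}$, so $\phi$ already assigns a value in $[c]$ to $h$. The natural candidate is therefore
\[
    \Phi(h, \phi) := \phi(h).
\]

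To verify that $\Phi$ is proper, I would pick any edge of $H \times \E_c(H)$, that is, a pair $(h_1, \phi_1) \sim (h_2, \phi_2)$, and show $\Phi(h_1, \phi_1) \neq \Phi(h_2, \phi_2)$. By definition of the tensor product, this edge condition unpacks into two simultaneous conditions: $h_1 \sim h_2$ in $H$, and $\phi_1 \sim \phi_2$ in $\E_c(H)$. The second of these, by the definition of $\E_c(H)$, says that $\phi_1$ and $\phi_2$ are co-proper, i.e.\ $\phi_1(u) \neq \phi_2(v)$ for every edge $u \sim v$ of $H$. Instantiating this with $u = h_1$ and $v = h_2$ (which is legal precisely because of the first condition $h_1 \sim h_2$) gives $\phi_1(h_1) \neq \phi_2(h_2)$, i.e.\ $\Phi(h_1,\phi_1) \neq \Phi(h_2,\phi_2)$, as desired.

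There is no real obstacle here; the statement is essentially a definition-chase, and the only subtle point is making sure the argument is valid even when $H$ is non-simple or when $\E_c(H)$ contains loops. If $(h,\phi)$ has a loop in the product, then $h$ has a loop in $H$ and $\phi$ has a loop in $\E_c(H)$; the latter means $\phi$ is a proper coloring of $H$, but then $h \sim h$ forces $\phi(h) \neq \phi(h)$, a contradiction. Hence no such looped vertex exists in $H \times \E_c(H)$, and $\Phi$ is a genuine proper $c$-coloring.
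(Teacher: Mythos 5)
Your proof is correct and is exactly the standard argument (the coloring $(h,\phi)\mapsto\phi(h)$) from El-Zahar and Sauer that the paper cites without reproducing. The extra remark about loops is a nice touch and consistent with the paper's observation that at most one of $H$ and $\E_c(H)$ has loops, so the tensor product is loopless.
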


If $\Psi$ is a proper $(c+t)$-coloring of an exponential graph $\E_c(H)$, where $t\ge 0$, we call $\{1,\ldots, c\}$ the {\it primary colors} and $\{c+1,\ldots, c+t\}$ the {\it secondary colors}. We say that a proper $(c+t)$-coloring $\Psi$ of $\E_c(H)$ is {\it suited} if for every $\phi \in V(\E_c(H))$, 
\[
\Psi(\phi) \in \im(\phi)\cup \{c+1,\ldots, c+t\}.
\]
In other words, a proper $(c+t)$-coloring is suited if it only assigns a primary color $b$ only to maps $\phi$ which have $b$ in their image.

\begin{lem}\label{lem:suited}
    If $\chi(\E_c(H)) \le c+t$, then $\E_c(H)$ has a suited $(c+t)$-coloring.
\end{lem}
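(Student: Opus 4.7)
The plan is to start from an arbitrary proper $(c+t)$-coloring $\Psi$ of $\E_c(H)$ and relabel its color classes via a permutation of $[c+t]$ so that the resulting coloring is suited. The main ingredient will be the observation that the $c$ constant maps $\mathbf{1},\ldots,\mathbf{c}$, where $\mathbf{b}\colon V(H)\to[c]$ denotes the map sending every vertex to $b$, form a clique $K_c$ in $\E_c(H)$: for $a\ne b$, the maps $\mathbf{a}$ and $\mathbf{b}$ are co-proper because $\mathbf{a}(u)=a\ne b=\mathbf{b}(v)$ for every edge $u\sim v$ of $H$. Consequently $\Psi$ assigns $\mathbf{1},\ldots,\mathbf{c}$ exactly $c$ distinct colors from $[c+t]$.

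Concretely, I would define $g\colon [c]\to[c+t]$ by $g(b)=\Psi(\mathbf{b})$, extend this injection to a bijection $g'\in S_{c+t}$ by matching the secondary colors $\{c+1,\ldots,c+t\}$ to $[c+t]\setminus g([c])$ in any order, and set $\Psi':=(g')^{-1}\circ\Psi$. Composing with a bijection of the color set preserves properness, so $\Psi'$ is still a proper $(c+t)$-coloring of $\E_c(H)$, and by construction $\Psi'(\mathbf{b})=b$ for every $b\in[c]$. Intuitively, I have rotated the colors so that each constant map now occupies the ``correct'' primary slot.

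To verify that $\Psi'$ is suited, I would argue by contradiction. Suppose $\Psi'(\phi)=b$ for some $b\in[c]$ with $b\notin\im(\phi)$. Then $\phi\ne\mathbf{b}$ (since $b\in\im(\mathbf{b})$), and $\phi$ is co-proper with $\mathbf{b}$: for every edge $u\sim v$ of $H$ one has $\mathbf{b}(u)=b\ne\phi(v)$, simply because $\phi$ never attains the value $b$. Hence $\phi\sim\mathbf{b}$ in $\E_c(H)$, and properness of $\Psi'$ forces $\Psi'(\phi)\ne\Psi'(\mathbf{b})=b$, a contradiction.

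I do not expect a serious obstacle here: the whole argument reduces to a clever relabeling of color classes. The only point worth checking carefully is that the hypothesis $\chi(\E_c(H))\le c+t$ being finite forces $\E_c(H)$ to be loop-free (as noted in the excerpt), which ensures in particular that $H$ has an edge and that the constant maps are genuinely distinct, non-looped vertices forming the $K_c$ used above.
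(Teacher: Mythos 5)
Your proposal is correct and is essentially identical to the paper's proof: both permute the colors of an arbitrary proper $(c+t)$-coloring so that each constant map $\mathbf{b}$ receives color $b$, and then use co-properness of $\phi$ with $\mathbf{b}$ whenever $b\notin\im(\phi)$ to conclude suitedness. No issues.
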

\begin{proof}
    Let $\Psi$ be a proper $(c+t)$-coloring, and let $\phi_i$ be the constant map $v\mapsto i$ in $V(\E_c(H))$. Since the maps $\phi_i$ are pairwise co-proper, they form a clique of size $c$ inside $\E_c(H)$ and $\Psi$ assigns them different colors.
    
    We may permute the colors of $\Psi$ so that $\Psi(\phi_i) = i$ for all $i$. We claim that such a $\Psi$ is suited. Indeed, if $\phi\in V(\E_c(H))$, $\phi$ is co-proper to all constant maps $\phi_i$ where $i$ is a primary color not in $\im(\phi)$. Since this map gets color $i$, we see that $\phi$ is not colored $i$. It follows that $\Psi(\phi) \in \im(\phi)\cup \{c+1,\ldots, c+t\}$ as desired.
\end{proof}

If $G$ is a finite simple graph, write $G^\circ$ for the graph obtained by adding loops to every vertex of $G$. We also write $G\subseteq H$ if $G$ is a subgraph of $H$. Recall that the Erd\H os--Ko--Rado theorem~\cite{ErKoRa} states that if $n\ge 2k$ and every pair of a family of $k$-subsets of an $n$-set intersects, then there are at most $\binom{n-1} {k-1}$ such subsets.

\begin{lem}\label{lem:exp-independence}
    Suppose $H$ is a graph on $n$ vertices. Then for any integer $c \geq 2n$, the independence number of the exponential graph satisfies
    \[
        \alpha(\E_c(H)) \leq n c^{n-1}.
    \]
\end{lem}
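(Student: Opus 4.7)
The plan is to apply the Erd\H{o}s--Ko--Rado theorem to the family of images of maps in an independent set of $\E_c(H)$. The degenerate case where $H$ has no edges is trivial, since then every pair of maps is vacuously co-proper and $\alpha(\E_c(H)) \le 1 \le nc^{n-1}$. So assume $H$ has at least one edge, and fix an independent set $\mathcal F \subseteq V(\E_c(H))$.

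The key observation is that if $\phi_1, \phi_2 \in \mathcal F$ are not co-proper, then there exist $u \sim v$ in $H$ with $\phi_1(u) = \phi_2(v)$, so the common color lies in $\im \phi_1 \cap \im \phi_2$. Hence the collection of images forms a pairwise intersecting family of subsets of $[c]$ of sizes at most $n$. The non-uniformity of this family prevents a direct EKR application, so I partition $\mathcal F$ by image size: for each $1 \le k \le n$, let $\mathcal I_k$ denote the set of distinct $k$-element images arising from $\mathcal F$, which is itself an intersecting family of $k$-subsets of $[c]$. Since $c \ge 2n \ge 2k$, EKR gives $|\mathcal I_k| \le \binom{c-1}{k-1}$. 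Moreover, for each $A \in \mathcal I_k$, the number of $\phi$ with $\im \phi = A$ is at most the number $k!\,S(n,k)$ of surjections from $V(H)$ onto $A$, where $S(n,k)$ is the Stirling number of the second kind.

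Combining and summing over $k$ gives $|\mathcal F| \le \sum_{k=1}^n \binom{c-1}{k-1}\,k!\,S(n,k)$. A short algebraic manipulation finishes the proof: using the identity $k!\binom{c-1}{k-1} = \tfrac{k}{c}\cdot c(c-1)\cdots(c-k+1)$, bounding $k \le n$ in each term, and applying the standard Stirling identity $c^n = \sum_{k} S(n,k)\,c(c-1)\cdots(c-k+1)$, the sum collapses to $\tfrac{n}{c}\cdot c^n = nc^{n-1}$, as desired.

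The main conceptual obstacle is setting up the EKR reduction. A naive ``common-element'' argument cannot work, because intersecting families need not be stars (e.g.\ $\{\{1,2\},\{2,3\},\{1,3\}\}$ in $[c]$ is intersecting but has empty total intersection); and stratifying by image size at first looks dangerous, since one worries about losing factors when handling maps with small images. The Stirling identity is what makes the stratified bound collapse exactly to the target $nc^{n-1}$ without waste, which is the one nontrivial piece of algebra in the argument.
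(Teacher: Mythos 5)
Your proof is correct and follows essentially the same route as the paper's: stratify the (pairwise intersecting) family of images by size, apply Erd\H{o}s--Ko--Rado to each layer, and multiply by surjection counts. The only cosmetic differences are that the paper first reduces to $K_n^\circ$ rather than arguing directly on $H$, and it evaluates the final sum $\sum_k \binom{c-1}{k-1}k!\,S(n,k)$ by interpreting it combinatorially as the number of maps in $[c]^n$ whose image contains $1$, rather than via the Stirling identity.
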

\begin{proof}
    First, observe that if $H$ and $H'$ have the same vertex set, and if $H \subseteq H'$, then $\E_c(H') \subseteq \E_c(H)$. This is because every pair of co-proper maps on $H'$ are also co-proper on $H$, as the edge set of $H$ is a subset of that of $H'$. Thus, if we let $K_n^\circ$ denote the complete graph on $n$ vertices where every vertex has a loop, then we see that $\E_c(K_n^\circ) \subseteq \E_c(H)$, so it suffices to upper-bound $\alpha(\E_c(K_n^\circ))$.
    
    By definition, the vertex set of $\E_c(K_n^\circ)$ is the set of maps $[c]^n$, and two vertices are adjacent in $\E_c(K_n^\circ)$ if and only if their images are disjoint.  Let $I$ be an independent set of $\E_c(K_n^\circ)$ and let $\F = \{ \im(\phi) \mid \phi \in I\}$. We can partition $\F$ into layers $\F_\ell$ according to the size of $\im(\phi)$, where $\F_\ell = \F \cap \binom{[c]} {\ell}$. By virtue of the fact that $I$ is an independent set, every pair of images of elements in $I$ must intersect, so $\F_\ell$ is an intersecting family in $\binom{[c]}{\ell}$. As $\ell \le n \le c/2$, the Erd\H os--Ko--Rado theorem~\cite{ErKoRa} applies to give
    \[
    |\F_\ell| \le \binom{c-1} {\ell - 1}.
    \]
    It follows that if $a_\ell$ is the number of surjective maps $[n]\twoheadrightarrow [\ell]$, then
    \[
    |I| \le \sum_{\ell=1}^{n} |\F_\ell| \cdot a_\ell \le \sum _{\ell=1}^n \binom{c-1} {\ell - 1} a_\ell.
    \]
    The right hand side is exactly the number of maps in $[c]^n$ which contain $1$ in their image. The number of such maps is at most $n\cdot c^{n-1}$, since there are $n$ ways to pick a vertex to send to $1$ and at most $c^{n-1}$ ways to color the rest. Thus, $|I|\le nc^{n-1}$ for all independent sets $I$, as desired.
\end{proof}

We remark that for fixed $n$, Lemma~\ref{lem:exp-independence} is tight for $H=K_n^\circ$ up to an additive error of $O(c^{n-2})$.

\section{Robust Colors}

The main technical lemma of Shitov's argument shows that for every suited $c$-coloring $\Psi$ of $\E_c(H)$ there is a ``central" vertex $v\in V(H)$ for which the color of a map $\phi \in \E_c(H)$ must appear in a ball around $v$. We extend his lemma so that it applies to suited $(c+t)$-colorings of $\E_c(H)$ as well. Write $\overline{N}(v) = \{v\} \cup N(v)$ for the {\it closed neighborhood} of a vertex $v$.

\begin{Def}
        Given a suited $(c+t)$-coloring $\Psi$ of $\E_c(H)$ and a vertex $v\in V(H)$, we say that a primary color $b\in[c]$ is {\it $v$-robust} if for every $\phi \in \Psi^{-1}(b)$ there exists a vertex $w\in \overline{N}(v)$ with $\phi(w)=b$.
\end{Def}

Lemma~\ref{lem:robust} below can be thought of as an analogue of a stablility result for the Erd\H os--Ko--Rado theorem. Such stability results (see e.g.~\cite{DiFr, Fr}) say that for an intersecting family of $k$-subsets of an $n$-set with size close to the maximum $\binom{n-1} {k-1}$, there exists a particular element in all of the sets. 

An independent set in $\E_c(H)$ is a family of pairwise non-co-proper maps, and we think of ``co-proper maps" as an analogue of ``disjoint sets," and being non-co-proper as ``intersecting" on a particular edge. We will show that for every large independent set $I$ in $\E_c(H)$, there is a particular vertex $v$ for which most of the elements of $I$ intersect on an edge close to $v$.

\begin{lem}\label{lem:robust}
    If $H$ is a triangle-free graph (possibly with loops) on $n\ge 4$ vertices, $c\ge 16(nt+n^3)$, $\Psi$ is a suited $(c+t)$-coloring of $\E_c(H)$, and
    \[
    x = \sqrt[4]{(nt+n^3)c^3},
    \]
    then there exists a vertex $v \in V(H)$ such that at least $c-x$ primary colors are $v$-robust.
\end{lem}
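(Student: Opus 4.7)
The plan is to argue by contradiction. Assume that every vertex $v \in V(H)$ admits more than $x$ non-$v$-robust primary colors. For each such bad pair $(v,b)$ fix a witness map $\phi_{v,b} \in \Psi^{-1}(b)$ with $\phi_{v,b}^{-1}(b) \cap \overline{N}(v) = \emptyset$, yielding more than $nx$ witness pairs in total. The goal is to double-count these witnesses in two different ways and reach an inconsistency.

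The key structural observation is that for each fixed $v$, the witnesses $\{\phi_{v,b}\}_{b}$ lie in pairwise distinct color classes of $\Psi$, and therefore form a clique in $\E_c(H)$ of size exceeding $x$. Here the triangle-free hypothesis on $H$ enters crucially: it forces $N(v)$ to be an independent set, so the only edges inside $H[\overline{N}(v)]$ are those incident to $v$ (plus possibly a loop at $v$). This makes the pairwise co-properness on the restrictions $\phi_{v,b}|_{\overline{N}(v)}$ tractable to analyze, because these restrictions take values in $[c]\setminus\{b\}$ and must satisfy only very local constraints.

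The contradiction would then be derived by combining three ingredients: the independent-set bound $|\Psi^{-1}(b)| \leq nc^{n-1}$ from Lemma~\ref{lem:exp-independence}; the total-count identity $\sum_b |\Psi^{-1}(b)| \leq c^n$; and the clique-plus-triangle-free structure above. The quartic scaling $x^4 = (nt+n^3)c^3$ strongly suggests two nested applications of Cauchy--Schwarz (or a second-moment estimate on the count of non-robust pairs $(v,b)$ summed over $v$), while the additive split between $nt$ and $n^3$ hints that the counting must separately handle a contribution from the secondary-color budget and a purely graph-theoretic contribution coming from structural constants of $H$.

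The main obstacle is carrying out the precise double-counting that recovers the quartic scaling in $x$. My approach would be to construct, for each witness $\phi_{v,b}$, a family of modified maps obtained by altering $\phi_{v,b}$ on a subset of $\overline{N}(v)$; since the co-properness constraints inside $\overline{N}(v)$ are simple by triangle-freeness, these modifications should land in many distinct $\Psi$-color classes. One then counts these modifications against the global budget $c^n$, using pigeonhole or Cauchy--Schwarz to extract the contradiction with margin $x^4$. Getting the precise form of the modification right --- so that each witness generates enough genuinely-distinct new maps without overcounting, and so that the secondary colors are accounted for separately --- is where the main technical work should lie.
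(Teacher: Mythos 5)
There is a genuine gap, and in fact the ``key structural observation'' on which your plan rests is false. Maps lying in pairwise distinct color classes of $\Psi$ do \emph{not} form a clique in $\E_c(H)$: a proper coloring guarantees that adjacent vertices receive different colors, but says nothing about vertices with different colors being adjacent. So the witnesses $\{\phi_{v,b}\}_b$ for a fixed $v$ need not be pairwise co-proper, and the clique structure you want to exploit simply is not there. Relatedly, your proposed use of triangle-freeness (that $N(v)$ is independent, so co-properness restricted to $\overline{N}(v)$ is ``local'') is not where the hypothesis actually does work. Beyond this, the proposal defers all of the quantitative content --- the double count, the source of the exponent $4$, and the split between $nt$ and $n^3$ --- to an unspecified ``nested Cauchy--Schwarz,'' so there is no step one could verify even granting the false premise.

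For comparison, the paper's argument runs through a different structure. For each vertex $v$ and primary color $b$ one considers $I(v,b)=\{\phi\in\Psi^{-1}(b):\phi(v)=b\}$ (suitedness guarantees these cover $\Psi^{-1}(b)$), and shows by a direct count that if $|I(v,b)|>n^2c^{n-2}$ then $b$ is $v$-robust: a hypothetical non-robust witness would have to be non-co-proper with every element of $I(v,b)$, but only $n^2c^{n-2}$ maps in $I(v,b)$ can fail co-properness with it. The set $V_b$ of vertices $v$ with $I(v,b)$ large is shown to be a clique \emph{in $H$}, and triangle-freeness enters precisely here, forcing $|V_b|\le 2$. Assuming no vertex lies in $c-x$ of the $V_b$'s, one bounds from below the set $S$ of maps avoiding value $b$ on $V_b$ for every $b$: writing $s(v)$ for the number of colors $b$ with $v\notin V_b$, one has $|S|=\prod_v s(v)$ with each $s(v)>x$ and $\sum_v s(v)\ge (n-2)c$, and a convexity argument gives $|S|>x^Ac^{n-A}$ with $A=2c/(c-x)\le 4$ --- this is the true origin of the quartic scaling. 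The upper bound $|S|\le nc\cdot n^2c^{n-2}+t\cdot nc^{n-1}=(nt+n^3)c^{n-1}$ (small sets $I(v,b)$ plus secondary color classes bounded via Lemma~\ref{lem:exp-independence}) then yields the contradiction, and also explains the additive split you noticed. None of these ingredients --- the sets $I(v,b)$, the large/small dichotomy, the clique bound $|V_b|\le 2$, or the product-of-$s(v)$ convexity step --- appears in your outline, so the proof would need to be rebuilt essentially from scratch.
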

\begin{proof}
For each $v\in V(H)$ and each primary color $b\in [c]$, let $I(v,b)$ be the set of maps $\phi\in\Psi^{-1}(b)$ for which $\phi(v)=b$. Since $b$ is a primary color and $\Psi$ is a suited coloring, every $\phi \in \Psi^{-1}(b)$ is in some $I(v,b)$.

We say that $I(v,b)$ is {\it large} if it contains more than $n^2 c^{n-2}$ elements. We first show that if $b$ is a primary color and $I(v,b)$ is large, then $b$ is $v$-robust. If not, there is some $\phi \in \Psi^{-1}(b)$ which does not take value $b$ on any neighbor of $v$. Since $\Psi^{-1}(b)$ is an independent set of $\E_c(H)$, $\phi$ is not co-proper with any element of $I(v,b)$. On the other hand, we can upper bound the number of $\psi \in I(v,b)$ which are not co-proper to $\phi$. Such a map must have $\psi(v)=b$ (because it is in $I(v,b)$) and $\psi(w)\in \im(\phi)$ for some $w\ne v$. This $w$ can be picked in at most $n$ ways, and $\psi(w)$ itself in at most $n$ ways since $|{\im(\phi)}| \le n$. The total number of ways to pick such a $\psi$ from $I(v,b)$ is at most $n^2c^{n-2}$. This is a contradiction if $|I(v,b)|>n^2 c^{n-2}$, so if $I(v,b)$ is large, then $b$ is $v$-robust.

Given a primary color $b$, define $V_b$ to be the set of vertices $v\in V(H)$ for which $I(v,b)$ is large. We claim that $V_b$ is always a clique of $H$. To see this, let $v,w$ be two distinct vertices of $V_b$. Since $I(v,b)$ is large and there are at most $nc^{n-2}$ maps in $I(v,b)$ which send two or more vertices to $b$, there exists some map $\phi \in I(v,b)$ for which $\phi^{-1}(b) = \{v\}$ exactly. Also, since $I(w,b)$ is large, $b$ is $w$-robust, and so $\phi$ must take the value $b$ on some element of $\overline{N}(w)$. It follows immediately that $v \in N(w)$. Since this needs to hold for every pair $v,w\in V_b$, the set $V_b$ forms a clique in $H$, as desired.

Since we assumed $H$ is triangle-free, it follows that $|V_b|\le 2$ for each $b$. It remains to show that there is a vertex $v$ such that $v\in V_b$ for at least $c-x$ primary colors $b$. Suppose otherwise.

Let $S$ be the set of maps $\phi \in [c]^n$ which have the property that $\phi(v)\ne b$ whenever $v\in V_b$. Let $s(v)$ denote the number of primary colors $b$ for which $v \not \in V_b$, so $|S| = \prod _v s(v)$. By the assumption that fewer than $c-x$ primary colors are $v$-robust, we see that $x  < s(v) \le c$ for all $v$. Also,
    \[
    \sum_{v \in V(H)} s(v) = nc - \sum_{b\in [c]} |V_b| \ge (n - 2) c.
    \]
    By a standard convexity argument, the product of $s(v)$ is minimized when their values are as far apart as possible under the above assumptions. Thus,
    \[
    |S| = \prod_{v\in V(H)} s(v) > x^{A} c^{n-A},
    \]
    where $A$ satisfies
    \[
    Ax+(n-A)c=(n-2)c,
    \]
    implying that $A= 2c / (c- x) \le 4$, since our assumption on $c$ implies $x \le \frac{c}{2}$. Therefore,
    \[
        |S| \geq x^A c^{n-A}> x^{4} c^{n - 4} \geq (nt+n^3)c^{n-1}.
    \]
    
    On the other hand, we know that no $\phi \in S$ is in any large $I(v,b)$ for any primary color $b$, since such a $\phi$ does not take value $b$ on $V_b$. Thus, $S$ lies in the union of the secondary color classes and the small sets $I(v,b)$ for primary colors $b$. The sizes of the latter we bound by $n^2c^{n-2}$, and the former by $nc^{n-1}$ by Lemma~\ref{lem:exp-independence}, whereby
    \[
    |S| \le nc \cdot (n^2c^{n-2}) + t\cdot (nc^{n-1}) = (nt + n^3)c^{n-1}.
    \]
    We have arrived at a contradiction, so there exists a vertex $v$ which lies in at least $c-x$ of the sets $V_b$. Each of these primary colors $b$ is $v$-robust, as desired.
\end{proof}

\section{The Construction}
If $G$ and $H$ are finite simple graphs, their \emph{strong product} $G \boxtimes H$ is a simple graph on vertex set $V(G) \times V(H)$, where vertices $(g_1,h_1)$ and $(g_2,h_2)$ are adjacent if one of the following three conditions hold.
\[
    g_1 \sim g_2, h_1 \sim h_2 \qquad \text{ or }\qquad g_1 \sim g_2, h_1=h_2 \qquad\text{ or }\qquad g_1=g_2, h_1 \sim h_2.
\]

Let $G$ be a finite simple graph. We will be studying the exponential graphs of the two graphs $G^\circ$ and $G\boxtimes K_q$, for some $q\ge 2$. Note that there is a natural embedding $\iota:\E_c(G^\circ)\hookrightarrow \E_c(G\boxtimes K_q)$ where an element $\phi$ of $V(\E_c(G^\circ)) = [c]^{V(G)}$ is sent to the map $\phi^*:(g,i)\mapsto \phi(g)$ which ignores the $K_q$ coordinate.

\begin{lem}\label{lem:main}
    Fix a simple graph $G$ with $n\ge 4$ vertices and girth at least $6$, and let $\delta = \frac{1}{81n}$. If $q$ is sufficiently large in terms of $n$ and if $c = (3+10\delta)q$, then $\chi(\E_c(G\boxtimes K_q)) > (1+\delta)c$.
\end{lem}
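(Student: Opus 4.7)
The plan is to argue by contradiction: suppose $\chi(\E_c(G\boxtimes K_q)) \le (1+\delta)c$, so by Lemma~\ref{lem:suited} there is a suited $(c+t)$-coloring $\Psi$ of $\E_c(G\boxtimes K_q)$ with $t \le \delta c$. The contradiction will come from exhibiting a clique $\N$ of $t+1$ maps in $\E_c(G\boxtimes K_q)$ each of whose members is forced by $\Psi$ to receive a secondary color; since such a clique requires $t+1$ distinct colors but only $t$ secondary colors exist, this does the job. This mirrors Shitov's original argument, except that he produced a single uncolorable map (yielding $\chi \ge c+1$), while we need roughly $\delta c$ of them forming a clique.

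The first step is to extract structural information about $\Psi$ via Lemma~\ref{lem:robust}. The embedding $\iota$ preserves both co-properness (adjacency in $G^\circ$ between $u, u'$ means $u = u'$ or $u \sim u'$ in $G$, which matches adjacency in $G\boxtimes K_q$) and the image of a map, so $\Psi \circ \iota$ is a suited $(c+t)$-coloring of $\E_c(G^\circ)$. Since $G^\circ$ is triangle-free on $n$ vertices and $c = (3+10\delta)q \ge 16(nt + n^3)$ for $q$ large, Lemma~\ref{lem:robust} yields a vertex $v \in V(G)$ such that the set $B \subseteq [c]$ of $v$-robust primary colors has $|B| \ge c - x$, where $x = ((nt+n^3)c^3)^{1/4}$. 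Substituting $t \le \delta c$ and $\delta = 1/(81n)$, the dominant term in $x$ is $(n\delta)^{1/4} c = c/3$, so for $q$ large we have $x \le (1+o_q(1)) c/3$, and hence the set $X := [c] \setminus B$ of non-$v$-robust primary colors has $|X| \le (1+o_q(1))c/3$ while $|B|\ge 2q$.

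The heart of the argument is the construction of $\N = \{\mu_1,\ldots,\mu_{t+1}\}$. Each $\mu_j\colon V(G)\times [q]\to[c]$ is designed so that its image is carefully split between reserved sub-blocks $X_j \subseteq X$ and $B_j \subseteq B$ (the exact rule depending on whether the vertex lies in $\overline{N}(v)\times[q]$ or not), with the sub-blocks pairwise disjoint across $j$. This disjointness delivers pairwise co-properness of the $\mu_j$, and it is feasible because $|B|\ge 2q$ leaves room for $t+1 \le \delta c+1 \ll q$ disjoint sub-blocks of size $\gg n$. Uncolorability of $\mu_j$ in primary colors then splits into two cases. For a non-$v$-robust $b \in X$, either $b\notin \im(\mu_j)$ (so suitedness blocks $\Psi(\mu_j)=b$) or $\mu_j$ is co-proper with the constant map $\phi_b\equiv b$ of color $b$ (using the normalization in Lemma~\ref{lem:suited}), both ruling out the color. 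For a $v$-robust $b \in B$, the argument is that $\Psi(\mu_j) = b$ would, via a comparison of $\mu_j$ with a suitable auxiliary map in $\iota(\E_c(G^\circ))$ also colored $b$, force $\mu_j$ to take value $b$ somewhere on $\overline{N}(v)\times[q]$---contradicting the construction, which confines $\mu_j$ to values in $X_j\subseteq X$ on that neighborhood.

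The main obstacle is the mismatch between Lemma~\ref{lem:robust}, which only controls $\Psi$ on $\iota(\E_c(G^\circ))$, and the fact that the clique members $\mu_j$ live in the strictly larger graph $\E_c(G\boxtimes K_q)$. Bridging this requires building explicit auxiliary maps in the image of $\iota$ and arguing that co-properness between them and $\mu_j$ forces the robustness conclusion to transfer. Within the tight color budget $c = (3+10\delta)q$, the factor of $3$ reflects the three natural classes of vertices near $v$---namely $\{v\}$, $N_G(v)$, and the rest---each of which must be allocated its own part of the palette. Producing $\delta c + 1$ simultaneously uncolorable and pairwise co-proper maps within this budget is where the real combinatorial work lies.
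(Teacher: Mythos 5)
Your setup matches the paper's: assume a suited $(c+t)$-coloring $\Psi$ with $t=\delta c$, restrict along $\iota$ to $\E_c(G^\circ)$, invoke Lemma~\ref{lem:robust} to get a vertex $v$ with at most $x\approx c/3$ non-robust primary colors, and aim for a clique of $t+1$ maps that must all receive secondary colors. But the mechanism you propose for ruling out primary colors on your clique members has a genuine gap, and it sits exactly where you admit ``the real combinatorial work lies.'' The problem is the colors that $\mu_j$ actually takes on $\overline{N}(v)\times[q]$. For a primary color $b\in\im(\mu_j)$, suitedness does not forbid $\Psi(\mu_j)=b$, and your fallback---co-properness with the constant map $\phi_b\equiv b$---fails precisely in this case: since $G\boxtimes K_q$ has no isolated vertices, $\mu_j$ is co-proper with $\phi_b$ if and only if $b\notin\im(\mu_j)$, so your ``two cases'' for non-robust $b$ collapse into one and leave every $b\in X_j=\im(\mu_j)\cap X$ unhandled. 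Robustness cannot help either: it only excludes a robust color $b$ when the map \emph{avoids} $b$ on $\overline{N}(v)$, so whatever palette you assign near $v$ (robust or not) is immune to both mechanisms. Some additional gadget is unavoidable.

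The paper's resolution is a two-stage construction you don't have. First it builds a large auxiliary clique $\M=\{\mu_{q+1},\dots,\mu_c\}$ of maps \emph{outside} $\iota$'s image, each with image exactly $\{1,\dots,2q\}\cup\{r\}$ (the three-layer rule $i$, $q+i$, $r$ according to $\dist(v,g)\in\{0,2\}$, $=1$, or $\ge 3$, with girth $6$ guaranteeing pairwise co-properness). Suitedness plus a pigeonhole over the $c-q$ distinct colors of this clique pins down $\Psi(\mu_{r_s})=r_s$ for at least $t+1$ indices $r_s$ avoiding $\{1,\dots,2q\}$ and the chosen robust colors $\sigma_s$. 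Only then does it define the final clique $\N$ of maps $\nu_s$ (inside $\iota$'s image, with image $\{r_s,\sigma_s\}$): the robust color $\sigma_s$ is excluded because $\nu_s$ avoids it near $v$, and the color $r_s$ taken near $v$ is ``pre-occupied'' by the co-proper neighbor $\mu_{r_s}$. The contradiction is then the pigeonhole you describe. Without constructing $\M$ and pinning down its colors, there is no way to exclude the primary colors your $\mu_j$ take on $\overline{N}(v)\times[q]$, so the proposal as written does not close.
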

\begin{proof}
Let $t=\delta c$. Suppose for the sake of contradiction that $\chi(\E_c(G \boxtimes K_q)) \leq (1+\delta)c= c+t$, so that by Lemma \ref{lem:suited} there is a suited $(c+t)$-coloring $\Psi$ of $\E_c(G\boxtimes K_q)$. Since $\E_c(G^\circ)$ is an induced subgraph of $\E_c(G\boxtimes K_q)$, $\Psi$ induces a suited $(c+t)$-coloring $\Psi^\circ$ on $\E_c(G^\circ)$.

With our choices of $t$ and $c$, the condition $c \ge 16(nt+n^3)$ holds if $q$ is sufficiently large. Also, $G$ has girth at least $6$ and in particular is triangle-free, so Lemma~\ref{lem:robust} applies to the graph $G^\circ$. Thus, there is a vertex $v\in V(G^\circ)$ such that at least $c-x$ primary colors of $\Psi^\circ$ are $v$-robust, where 
\[
    x =  \sqrt[4]{(nt+n^3)c^3}=((\delta n)^{1/4}+o(1))c
\]
as $q\rightarrow \infty$. By our choice of $\delta$, this means that $x = (\frac13 +o(1)) c$. We find that
\begin{align*}
    &c-x  = (\tfrac23 +o(1)) c = (2 + \tfrac{20}3 \delta + o(1))q, \\
    &2q + t + 1  = (2 + 3\delta + 10 \delta^2 +o(1))q.
\end{align*}
Observe that $10\delta <\frac {11}3$, so $c-x \ge 2q + t  +1$ for $q$ large enough. Thus, there exist $t+1$ primary colors $\sigma_1,\ldots, \sigma_{t+1}\not \in \{1,\ldots, 2q\}$ which are $v$-robust in the coloring $\Psi^\circ$.

We next pick a set $\M = \{\mu_{q+1},\ldots, \mu_{c}\}$ of vertices in $\E_c(G\boxtimes K_q)$. They are defined by
\[
\mu_r(g, i) = 
\begin{cases}
    i & \text{if } \dist(v, g) \in \{ 0, 2\},  \\ 
    q + i & \text{if } \dist(v, g) =1,    \\
    r & \text{otherwise}.
\end{cases}
\]
We claim that if $r\neq r'$, then $\mu_r$ and $\mu_{r'}$ are co-proper. To see this, suppose that $\mu_r(g,i)=\mu_{r'}(h,j)$ for adjacent $(g,i),(h,j)$. Notice that this is impossible if $i \neq j$, so $i=j$, implying that $g \sim h$. Since the girth of $G$ is at least $6$, there are no edges $(g,h)\in E(G)$ for which $\dist(v,g)$ and $\dist(v,h)$ are both at most $2$ and have the same parity. Thus, without loss of generality, $g$ has distance at least $3$ from $v$. This implies $\mu_r(g,i)=r$. The only way $\mu_{r'}(h,j)=r$ is if $\dist(v,h)=1$ and $q+j=r$, but then $\dist(g,h)\geq 2$ by the triangle inequality, a contradiction. Therefore, we conclude that $\mu_r$ and $\mu_{r'}$ are co-proper whenever $r\ne r'$, so $\M$ forms a clique of size $c-q$ in $\E_c(G\boxtimes K_q)$.

Since we chose $t = (3\delta +10 \delta^2)q$, if $q$ is sufficiently large, we have 
\[
c-3q = 10\delta q \ge (9 \delta+30 \delta^2)q+2= 3t + 2
\]
and therefore $c-3q-2t-1 \ge t+1$. In particular, at least $t+1$ of the colors $\{\Psi(\mu_r)\}_{r=q+1}^c$ do not lie in the union $\{1,\ldots, 2q\}\cup \{\sigma_1,\ldots, \sigma_{t+1}\} \cup \{c+1,\ldots, c+t\}$.

Let $\M' = \{\mu_{r_1},\ldots, \mu_{r_{t+1}}\}$ be a set of $t+1$ vertices of $\M$ with colors not among $\{1,\ldots, 2q\}\cup \{\sigma_1,\ldots, \sigma_{t+1}\} \cup \{c+1,\ldots, c+t\}$. Since $\Psi$ is a suited coloring and $\im(\mu_{r_s}) = \{1,\ldots, 2q\}\cup \{r_s\}$, it follows that $\Psi(\mu_{r_s}) =r_s$ for each $1 \leq s\leq t+1$.

We define a set $\N$ of $t+1$ other vertices $\nu_{1}, \ldots, \nu_{t+1}$ in $\E_c(G\boxtimes K_q)$, by
\[
\nu_{s}(g, i) = 
\begin{cases} 
    r_s & \text{if } \dist(v, g) \le 1,  \\ 
    \sigma_s & \text{otherwise}.    
\end{cases}
\]

Recall that we chose $\{r_1,\ldots, r_{t+1}\}$ to be $t+1$ distinct colors disjoint from $\{\sigma_1,\ldots,\sigma_{t+1}\}$. Therefore, $\nu_s$ and $\nu_{s'}$ have disjoint images when $s \neq s'$, so $\N$ forms a clique of size $t+1$ in $\E_c(G \boxtimes K_q)$. Also, the maps $\nu_s$ are constant on the $K_q$ coordinate, so they lie in the image of the embedding $\iota:\E_c(G^\circ)\hookrightarrow \E_c(G\boxtimes K_q)$ and correspond to vertices of $\E_c(G^\circ)$. We chose $\sigma_s$ to be $v$-robust in $\Psi^\circ$, so if $\Psi^\circ(\iota^{-1}(\nu_s))=\sigma_s$, then $\iota^{-1}(\nu_s)(g)=\sigma_s$ for some $g\in V(G^\circ)$ with $\dist(v,g) \leq 1$. This contradicts the definition of $\nu_s$, so $\Psi(\nu_s) = \Psi^\circ(\iota^{-1}(\nu_s))\ne \sigma_s$. 

Therefore, by the suitedness of $\Psi$, $\Psi(\nu_s) \in \{r_s\} \cup \{c+1,\ldots, c+t\}$ for each $1 \leq s \leq t+1$. There are only $t$ secondary colors to use for the $t+1$ vertices of this clique, which implies that for some $s$, $\Psi(\nu_s) = r_s$. But $\Psi(\mu_{r_s}) = r_s$ as well, and $\mu_{r_s}$ and $\nu_s$ are co-proper in $\E_c(G\boxtimes K_q)$. This is the desired contradiction, completing the proof that $\chi(\E_c(G\boxtimes K_q)) > (1+\delta)c$.
\end{proof}

All that remains is to find a $G$ with large fractional chromatic number which satisfies the conditions of Lemma~\ref{lem:main}. A famous probabilistic argument of Erd\H os~\cite{Er} shows that there exist graphs of arbitrarily large girth and fractional chromatic number; in order to obtain as large a value of $\delta$ as possible, we choose the parameters as follows.
\begin{lem}\label{lem:exists-graph}
    There exists a simple graph $G$ on at most $2\times 10^6$  vertices with $\girth(G) \geq 6$ and $\chi_f(G) \geq 3.1$. 
\end{lem}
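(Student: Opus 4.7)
The plan is to use the classical probabilistic construction of Erd\H os. Take the random graph $G \sim G(n,p)$ on $n = 2 \times 10^6$ vertices with edge probability $p = c/n$ for a well-chosen absolute constant $c$ around $15$--$16$.

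The first step is a routine first-moment bound on short cycles. For each $k \in \{3,4,5\}$, the expected number of cycles of length $k$ in $G(n,p)$ is at most $n^k p^k / (2k) = c^k/(2k)$, a constant depending only on $c$. Summing, the expected total number of cycles of length at most $5$ is bounded by some $M = M(c)$, so Markov's inequality yields that $G$ has at most $2M$ short cycles with probability at least $1/2$.

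The second step bounds the independence number. Using the standard estimates $\binom{n}{k} \le (en/k)^k$ and $(1-p)^{\binom{k}{2}} \le \exp(-pk(k-1)/2)$, the expected number of independent sets of size $k = n/\chi_0$ is at most $\exp\bigl(k(\ln(e\chi_0) - p(k-1)/2)\bigr)$. Choosing $c > 2\chi_0(1+\ln \chi_0)$ makes this exponent strongly negative, so that $\alpha(G) \le n/\chi_0$ with probability $1 - o(1)$. For instance, $\chi_0 = 3.5$ and $c = 16$ satisfy the required inequality $c > 2\chi_0(1+\ln \chi_0) \approx 15.77$.

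A union bound produces a realization $G$ with $\alpha(G) \le n/\chi_0$ and at most $2M$ short cycles. Deleting one vertex from each short cycle yields $G'$ with girth at least $6$, and since $M$ is an absolute constant the number of deleted vertices is negligible compared to $n = 2 \times 10^6$. Because $\alpha(G') \le \alpha(G)$, we conclude
\[
    \chi_f(G') \ge \frac{|V(G')|}{\alpha(G')} \ge \frac{n - 2M}{n/\chi_0} \ge 3.1,
\]
with the final inequality verified directly from the chosen values. The main obstacle is the tightness of the numerics: pushing $\chi_0$ noticeably above $3.1$ forces $c$ up, and the pentagon contribution $c^5/10$ to $M$ grows rapidly with $c$; the vertex budget $n = 2\times 10^6$ is chosen precisely so that even with $\chi_0$ safely above $3.1$, the deletion step still leaves enough vertices for the ratio to exceed $3.1$. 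So the real content of the lemma is a careful numerical verification rather than any new probabilistic idea.
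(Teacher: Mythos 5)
Your proposal is correct and follows essentially the same route as the paper: an Erd\H os-style deletion argument on $G(n,p)$ with $n=2\times 10^6$ and $np=16$ (the paper takes $p=8\times 10^{-6}$, i.e.\ exactly your $c=16$), bounding short cycles by a first-moment/Markov argument, bounding $\alpha$ by the standard $\binom{n}{k}(1-p)^{\binom{k}{2}}$ computation with $k\approx n/3.5$, and then deleting one vertex per short cycle. The only cosmetic difference is that you phrase the independence bound via the general condition $c>2\chi_0(1+\ln\chi_0)$ before specializing, and your numerics ($3.5\cdot(1-2M/n)\approx 3.102$) check out, matching the paper's $1770000/570000\approx 3.105$.
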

\begin{proof}
    Let $n=2\times 10^6$ and $p=8\times 10^{-6}$, and let $G_0 \sim G(n,p)$ be an Erd\H os--R\'enyi random graph with these parameters. Let $X$ denote the number of cycles of length at most $5$ in $G_0$; then we can compute
    \[
        \mathbb E[X]\leq \frac{n^3 p^3}{6}+\frac{n^4 p^4}{8}+\frac{n^5 p^5}{10} \leq 115000=:t.
    \]
    Therefore, by Markov's inequality, with probability at least $1/2$, $G_0$ will contain at most $2t$ cycles of length at most $5$. Next, let $k=570000$, and observe that
    \[
        \binom nk (1-p)^{\binom k2} < \frac 14.
    \]
    Therefore, with probability at least $3/4$, $G_0$ will contain no independent set of size $k$. Thus, there exists a specific graph $G_1$ on $n$ vertices with at most $2t$ cycles of length at most $5$ and $\alpha(G_1) \leq k$. We delete one vertex from each cycle of length at most $5$ in $G_1$ to obtain a new graph $G$ on at least $n-2t$ vertices with $\girth(G) \geq 6$ and $\alpha(G) \leq \alpha(G_1) \leq k$. Moreover,
    \[
        \chi_f(G) \geq \frac{|V(G)|}{\alpha(G)} \geq \frac{n-2t}{k} \geq 3.1,
    \]
    as desired.
\end{proof}

\begin{proof}[Proof of Theorem~\ref{thm:main}.] Let $G$ be the graph from Lemma \ref{lem:exists-graph} on $n\leq 2\times 10^6$ vertices. Let $\delta=1/(81n) \geq 10^{-9}$, let $q$ be sufficiently large so that Lemma~\ref{lem:main} applies, and let $c=(3+10\delta)q$. We will only prove Theorem \ref{thm:main} for $c$ of this form; it can be proved for all $c$ large enough by rounding off to the nearest such value.

It is easy to see that
\[
\chi(G\boxtimes K_q)\geq \chi_f(G)\chi_f(K_q) \ge 3.1q > (1 + \delta)c.
\]
By Lemma~\ref{lem:main}, we also know that $\chi(\E_c(G\boxtimes K_q)) > (1+\delta)c$. Since $G \boxtimes K_q$ is not $c$-colorable, we see that $\E_c(G \boxtimes K_q)$ is simple, as is $G \boxtimes K_q$.

On the other hand, Lemma~\ref{lem:c-colorable} shows that $\chi((G\boxtimes K_q) \times \E_c(G\boxtimes K_q)) \le c$, which proves the theorem for some $\delta \ge 10^{-9}$. 
\end{proof}

\paragraph{Acknowledgments.} We would like to thank Nitya Mani for interesting discussions on Shitov's proof, and Lisa Sauermann for many helpful comments on the early drafts of this paper.

\end{document}